\newtheorem{Theorem}{Theorem}[section]
\newtheorem{Lemma}[Theorem]{Lemma}
\newtheorem{Corollary}[Theorem]{Corollary}
\theoremstyle{definition}
\theoremstyle{remark}
\newtheorem{Remark}[Theorem]{Remark}
\def\@thmcountersep{-}
\numberwithin{equation}{section}
\begin{document} 

\title[A homotopy classification of two-component spatial graphs]{A homotopy classification of two-component spatial graphs up to neighborhood equivalence}

\author{Atsuhiko Mizusawa}
\address{Department of Mathematics, Faculty of Fundamental Science and Engineering, Waseda University, 3-4-1 Okubo, Shinjuku-ku, Tokyo 169-8555, Japan}
\email{a\symbol{"5F}mizusawa@aoni.waseda.jp}

\author{Ryo Nikkuni}
\address{Department of Mathematics, School of Arts and Sciences, Tokyo Woman's Christian University, 2-6-1 Zempukuji, Suginami-ku, Tokyo 167-8585, Japan}
\email{nick@lab.twcu.ac.jp}
\thanks{The second author was partially supported by Grant-in-Aid for Scientific Research (C) (No. 24540094), Japan Society for the Promotion of Science.}

\subjclass{Primary 57M15; Secondary 57M25}

\date{}

\dedicatory{This article is dedicated to Professors Taizo Kanenobu, Yasutaka Nakanishi and Makoto Sakuma on their 60th birthdays.}

\keywords{Spatial graph, Linking number, Delta move, Handlebody-link}

\begin{abstract}
A neighborhood homotopy is an equivalence relation on spatial graphs which is generated by crossing changes on the same component and neighborhood equivalence. We give a complete classification of all $2$-component spatial graphs up to neighborhood homotopy by the elementary divisor of a linking matrix with respect to the first homology group of each of the connected components. This also leads a kind of homotopy classification of $2$-component handlebody-links. 
\end{abstract}

\maketitle

\section{Introduction} 

Throughout this paper we work in the piecewise linear category. An embedding of a graph into the $3$-sphere ${\mathbb S}^{3}$ is called a {\it spatial embedding} of the graph and the image is called a {\it spatial graph}. We say that two spatial graphs $G$ and $G'$ are {\it ambient isotopic} if there exists an orientation-preserving self-homeomorphism $\Phi$ on ${\mathbb S}^{3}$ such that $\Phi(G)=G'$. A graph is said to be {\it planar} if there exists an embedding of the graph into the $2$-sphere, and a spatial embedding of a planar graph is said to be {\it trivial} if it is ambient isotopic to an embedding of the graph into a $2$-sphere in ${\mathbb S}^{3}$. Such an embedding is unique up to ambient isotopy \cite{M69}. On the other hand, let us denote the regular neighborhood of a spatial graph $G$ in ${\mathbb S}^{3}$ by $N(G)$. Then, two spatial graphs $G$ and $G'$ are said to be {\it neighborhood equivalent} if there exists an orientation-preserving self-homeomorphism $\Phi$ on ${\mathbb S}^{3}$ such that $\Phi(N(G))=N(G')$ \cite{S70}. Note that ambient isotopic two spatial graphs are homeomorphic as abstract graphs, but neighborhood equivalent two spatial graphs are not always homeomorphic. For a spatial graph $G$ and an edge $e$ of $G$ which is not a loop, we call the spatial graph obtained from $G-{\rm int}e$ by identifying the end vertices of $e$ the {\it edge contraction} of $G$ along $e$. A {\it vertex splitting} is the reverse of an edge contraction. Then it is known that two spatial graphs are neighborhood equivalent if they are transformed into each other by edge contractions, vertex splittings and ambient isotopies \cite{I08}. 

Two oriented links are said to be {\it link homotopic} if they are transformed into each other by crossing changes on the same component and ambient isotopies. It is well known that two oriented $2$-component links are link homotopic if and only if they have the same linking number \cite{M54}. Our purpose in this article is to generalize this fact to $2$-component spatial graphs from a viewpoint of neighborhood equivalence. We introduce the notion of {\it neighborhood homotopy} on spatial graphs as an equivalence relation which is generated by crossing changes on the same component and neighborhood equivalence; that is, two spatial graphs $G$ and $G'$ are neighborhood homotopic if they are transformed into each other by crossing changes between edges which belong to the same component, edge contractions, vertex splittings and ambient isotopies. Note that in the case of oriented links, neighborhood homotopy coincides with link homotopy. Moreover, we also introduce another equivalence relation on spatial graphs as follows. A {\it Delta move} is a local move on a spatial graph as illustrated in Fig. \ref{deltamove} \cite{Matveev87}, \cite{MN89}. We say that two spatial graphs are {\it Delta neighborhood equivalent} if they are transformed into each other by Delta moves, edge contractions, vertex splittings and ambient isotopies. 

\begin{figure}[htbp]
      \begin{center}
\scalebox{0.375}{\includegraphics*{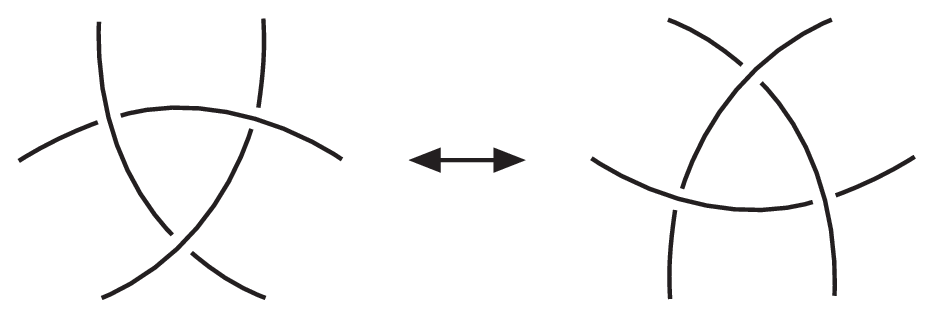}}
      \end{center}
   \caption{}
  \label{deltamove}
\end{figure} 

In \cite{Mi12}, the first author introduced a sequence of invariant nonnegative integers for $2$-component spatial graphs under neighborhood equivalence as follows. Let $G=G_{1}\cup G_{2}$ be a $2$-component spatial graph. Let ${\mathcal Z}=\left\{z_{1},z_{2},\ldots,z_{m}\right\}$ be a basis of $H_{1}(G_{1};{\mathbb Z})$ and ${\mathcal W}=\left\{w_{1},w_{2},\ldots,w_{n}\right\}$ a basis of $H_{1}(G_{2};{\mathbb Z})$. Let $M_{G}\left({\mathcal Z},{\mathcal W}\right)$ be the $(m,n)$-matrix whose $(i,j)$-entry is the {\it linking number} ${\rm lk}(z_{i},w_{j})$ in ${\mathbb S}^{3}$. Then the sequence of elementary divisors $d_{1},d_{2},\ldots ,d_{l}$ $(d_{i}\in {\mathbb Z}_{> 0},\ d_{i}|d_{i+1}\ (i=1,2,\ldots, l-1))$ of $M_{G}\left({\mathcal Z},{\mathcal W}\right)$ is an invariant under neighborhood equivalence. We define ${\rm Lk}(G_{1},G_{2})$ by the sequence $\left\{d_{1},d_{2},\ldots,d_{l}\right\}$ if $l\ge 1$ and otherwise $0$. Now we state our main theorem. 

\begin{Theorem}\label{main}
Let $G=G_{1}\cup G_{2}$ and $G'=G'_{1}\cup G'_{2}$ be two $2$-component spatial graphs satisfying with $H_{1}(G_{i};{\mathbb Z})\cong H_{1}(G'_{i};{\mathbb Z})\ (i=1,2)$. Then the following are equivalent. 
\begin{enumerate}
\item $G$ and $G'$ are neighborhood homotopic. 
\item $G$ and $G'$ are Delta neighborhood equivalent. 
\item ${\rm Lk}(G_{1},G_{2})={\rm Lk}(G'_{1},G'_{2})$. 
\end{enumerate}
\end{Theorem}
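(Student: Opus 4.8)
The plan is to run the cycle of implications $(1)\Rightarrow(3)\Rightarrow(2)\Rightarrow(1)$. The two directions landing in $(3)$ are pure invariance statements. A crossing change between two edges of the same component $G_i$ touches no crossing between $G_1$ and $G_2$, hence fixes every ${\rm lk}(z_i,w_j)$ and so the whole matrix $M_G({\mathcal Z},{\mathcal W})$; a Delta move preserves all pairwise linking numbers by \cite{MN89}, so it too fixes the inter-component linking data. Since the elementary divisors of $M_G({\mathcal Z},{\mathcal W})$ are by construction independent of the chosen bases and are invariant under neighborhood equivalence by \cite{Mi12}, both neighborhood homotopy and Delta neighborhood equivalence preserve ${\rm Lk}(G_1,G_2)$; this gives $(1)\Rightarrow(3)$ (and, as a check, $(2)\Rightarrow(3)$).

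The substance is the realization $(3)\Rightarrow(2)$, which I would carry out in three stages. First, using the characterization of neighborhood equivalence by edge contractions and vertex splittings \cite{I08}, I would replace each connected $G_i$ by a standard spine of its regular neighborhood $N(G_i)$, namely a bouquet of $m$ (resp. $n$) circles, where $m=\mathrm{rank}\,H_1(G_1;{\mathbb Z})$ and $n=\mathrm{rank}\,H_1(G_2;{\mathbb Z})$; by hypothesis $H_1(G_i;{\mathbb Z})\cong H_1(G'_i;{\mathbb Z})$, so $G$ and $G'$ now carry spines with the same numbers of loops. Next, a handle slide of one loop of $G_1$ over another, realized by a band sum performed inside the handlebody $N(G_1)$ (which is disjoint from $N(G_2)$), is a neighborhood equivalence effecting an elementary row operation $z_i\mapsto z_i\pm z_j$ on $M_G$, and symmetrically for columns via slides in $G_2$. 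Because equality of elementary divisors means $M_{G'}=PM_GQ$ for some $P\in GL_m({\mathbb Z})$ and $Q\in GL_n({\mathbb Z})$, iterating such slides brings both linking matrices to the common Smith normal form $\mathrm{diag}(d_1,\dots,d_l,0,\dots,0)$.

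For the third stage, with identical linking matrices in hand, I would invoke the completeness half of \cite{MN89}: two links with equal pairwise linking numbers are carried into one another by Delta moves. Applying this to disjoint pushoff representatives of the two loop systems, chosen to avoid the wedge points so that the resulting local Delta moves lift to moves on the spatial graph itself, reduces $G$ to an explicit model depending only on $d_1,\dots,d_l$, and reduces $G'$ to the same model; this yields $(3)\Rightarrow(2)$. To close the cycle with $(2)\Rightarrow(1)$, I would show a single Delta move is generated by crossing changes on the same component up to neighborhood equivalence, arguing by cases on how the three strands distribute over the two components: when all three lie on one component the move is intrinsic to it and is absorbed by self-crossing changes, and the mixed cases are reduced to this one by a vertex splitting near the move site.

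I expect the main obstacle to be the third stage. The naive hope of treating the loops of $G_i$ as an honest link fails, because a connected handlebody admits no disconnected spine, so the loops through the common wedge point are genuinely non-disjoint; one must instead work with disjoint pushoffs and verify carefully that the Murakami–Nakanishi Delta moves can be localized away from the vertices and pulled back to legitimate moves on $G$. Controlling, throughout Stages 2 and 3, the interplay between the intra-component operations (handle slides, self-crossing changes, self-Delta moves) and the inter-component linking data is where the real care is required.
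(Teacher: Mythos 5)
Your invariance directions and the overall architecture of the realization step (contract to bouquet spines, realize row and column operations geometrically, normalize the linking matrix to Smith form) match the paper's proof of $(3)\Rightarrow(2)$. But your Stage 3 has a genuine gap. The Murakami--Nakanishi theorem classifies links up to Delta moves by \emph{all} pairwise linking numbers, and for the pushoff link of the two loop systems this includes the linking numbers between distinct loops of the \emph{same} bouquet $B_1$ (and likewise for $B_2$). These intra-component numbers are not controlled by hypothesis (3) and will in general differ between $G$ and $G'$: take $G_1$ a planar bouquet of two loops and $G'_1$ a bouquet whose two loops form a Hopf link after pushoff, with $G_2=G'_2$ a split unknot; then ${\rm Lk}(G_1,G_2)={\rm Lk}(G'_1,G'_2)=0$, but the pushoff links have different pairwise linking numbers and are not Delta equivalent as links, so the Murakami--Nakanishi theorem gives you nothing, even though the spatial graphs \emph{are} Delta neighborhood equivalent. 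The missing ingredient is exactly the paper's Lemma \ref{delta_nh1}-type argument: after contracting to bouquets and writing everything as a band sum of Hopf links (Lemma \ref{forklore}), the Hopf chords whose associated edges lie in the same component can be cancelled by Delta moves and ambient isotopies, using the freedom to slide band ends through the wedge point (Lemmas \ref{delta_lemma} and \ref{delta_lemma2}, Figs.\ \ref{Hopf_cancel0} and \ref{Hopf_cancel}). This step genuinely exploits the bouquet structure and cannot be delegated to the link classification of disjoint pushoffs; the same band-sum normal form then also handles the inter-component chords without any appeal to the completeness half of Murakami--Nakanishi.

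A secondary problem: in $(2)\Rightarrow(1)$ you propose to reduce the mixed case (strands on both components) to the all-on-one-component case ``by a vertex splitting near the move site,'' but a vertex splitting cannot move a strand from one component to the other, so this reduction does not exist. The correct (and the paper's) argument is a pigeonhole observation: with only two components, at least two of the three strands of any Delta move lie on the same component, and the Delta move is realized by two crossing changes between those two strands plus an ambient isotopy (Fig.\ \ref{delta_hm}). Note also that you cannot invoke same-component crossing changes inside your proof of $(3)\Rightarrow(2)$ without first establishing $(1)\Rightarrow(2)$, which your cycle $(1)\Rightarrow(3)\Rightarrow(2)\Rightarrow(1)$ has not yet provided at that point; the paper avoids this by proving $(1)\Leftrightarrow(2)$ first (Lemmas \ref{delta_nh1} and \ref{delta_nh2}) and then working entirely with Delta moves.
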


A {\it handlebody-link} in ${\mathbb S}^{3}$ is the image of an embedding of mutually disjoint handlebodies into ${\mathbb S}^{3}$. Two handlebody-links $L$ and $L'$ are said to be {\it equivalent} if there exists an orientation-preserving self-homeomorphism $\Phi$ on ${\mathbb S}^{3}$ such that $\Phi(L)=L'$. Note that two handlebody-links are equivalent if and only if their spines are neighborhood equivalent as spatial graphs. We say that two handlebody-links are {\it homotopic} if their spines are neighborhood homotopic as spatial graphs. For a $2$-component handlebody-link $L=V_{1}\cup V_{2}$, we can define ${\rm Lk}(V_{1},V_{2})$ in the same way as ${\rm Lk}(G_{1},G_{2})$ for a $2$-component spatial graph $G=G_{1}\cup G_{2}$ \cite{Mi12}. Then by Theorem \ref{main}, we immediately have the following. 

\begin{Corollary}\label{main_cor}
Let $L=V_{1}\cup V_{2}$ and $L'=V'_{1}\cup V'_{2}$ be two $2$-component handlebody-links satisfying with $H_{1}(V_{i};{\mathbb Z})\cong H_{1}(V'_{i};{\mathbb Z})\ (i=1,2)$. Then $L$ and $L'$ are homotopic if and only if ${\rm Lk}(V_{1},V_{2})={\rm Lk}(V'_{1},V'_{2})$. 
\end{Corollary}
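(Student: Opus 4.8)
The plan is to reduce Corollary \ref{main_cor} to Theorem \ref{main} through the standard dictionary between a handlebody-link and its spine. First I would fix spines: choose a connected spatial graph $G_{i}$ that is a spine of the handlebody $V_{i}$, and likewise $G'_{i}$ for $V'_{i}$, so that $G=G_{1}\cup G_{2}$ and $G'=G'_{1}\cup G'_{2}$ are $2$-component spatial graphs. Since each $V_{i}$ deformation retracts onto $G_{i}$, we have $H_{1}(G_{i};{\mathbb Z})\cong H_{1}(V_{i};{\mathbb Z})$, and similarly $H_{1}(G'_{i};{\mathbb Z})\cong H_{1}(V'_{i};{\mathbb Z})$. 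The hypothesis $H_{1}(V_{i};{\mathbb Z})\cong H_{1}(V'_{i};{\mathbb Z})$ then yields $H_{1}(G_{i};{\mathbb Z})\cong H_{1}(G'_{i};{\mathbb Z})$, so the hypothesis of Theorem \ref{main} is satisfied by $G$ and $G'$.

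Next I would match up the relevant notions in the two statements, using only the equivalence of conditions (1) and (3) of Theorem \ref{main}. By the definition recalled just before the corollary, $L$ and $L'$ are homotopic precisely when their spines $G$ and $G'$ are neighborhood homotopic, which is condition (1). Moreover $\mathrm{Lk}(V_{1},V_{2})$ is defined to equal $\mathrm{Lk}(G_{1},G_{2})$ computed from the spine, and similarly $\mathrm{Lk}(V'_{1},V'_{2})=\mathrm{Lk}(G'_{1},G'_{2})$; hence the equality $\mathrm{Lk}(V_{1},V_{2})=\mathrm{Lk}(V'_{1},V'_{2})$ is exactly condition (3). Thus the implication $(1)\Leftrightarrow(3)$ of Theorem \ref{main} transports directly to the corollary.

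The only points that require attention are definitional rather than substantive, and I do not expect a genuine obstacle. One must know that $\mathrm{Lk}(V_{1},V_{2})$ is a well-defined invariant of the handlebody-link, i.e.\ independent of the chosen spine; this holds because any two spines of $V_{i}$ are related by edge contractions, vertex splittings and ambient isotopies, under which the sequence of elementary divisors of the linking matrix is unchanged, exactly the neighborhood-equivalence invariance recalled before Theorem \ref{main} and established in \cite{Mi12}. The same remark shows that homotopy of handlebody-links, though defined via spines, does not depend on the spine selected, since neighborhood equivalence is built into neighborhood homotopy. Because these compatibilities are precisely what the preceding discussion has already assembled, the entire content of Corollary \ref{main_cor} is carried by Theorem \ref{main}, and the proof amounts to the translation above.
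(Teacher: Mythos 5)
Your proposal is correct and matches the paper's approach: the paper derives the corollary immediately from Theorem \ref{main} via the spine dictionary, which is exactly the translation you spell out. The additional remarks on spine-independence are sound and consistent with the definitions recalled before the corollary.
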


\begin{Remark}
In \cite{F08}, Fleming introduced ``Milnor invariants'' for spatial graphs which is invariant under crossing changes on the same component and ambient isotopies. Since his invariants are derived from the fundamental group of spatial graph exterior, they also are invariant under neighborhood homotopy. See also \cite{M54} for Milnor's link homotopy invariants. 
\end{Remark}

In the next section, we show some lemmas which are needed later. We prove Theorem \ref{main} in section $3$. 

\section{Local moves on spatial graphs} 

Let $B^{3}$ be the oriented unit $3$-ball. Let $T$ be a tangle in $B^{3}$ and $A$ a disjoint union of arcs in $\partial B$ with $\partial T = \partial A$ as illustrated in Fig. \ref{Hopf_chord}. Let $G$ be a spatial graph. Let $\psi_{i}:B^{3}\to {\mathbb S}^{3}$ be an orientation-preserving embedding for $i=1,2,\ldots,k$. Note that each $\psi_{i}(T\cup A)$ is a {\it Hopf link} in ${\mathbb S}^{3}$. Let $b_{i,p}$ be a $2$-disk which is embedded in ${\mathbb S}^{3}$ for $i=1,2,\ldots,k$ and $p=1,2$. Suppose that $\psi_{i}(B^{3})\cap G=\emptyset$ for each $i$, $\psi_{i}(B^{3})\cap \psi_{j}(B^{3})=\emptyset$ for $i\neq j$ and $b_{i,p}\cap b_{j,q}=\emptyset$ for $(i,p)\neq (j,q)$. Suppose that $b_{i,p}\cap G=\partial b_{i,p}\cap G$ is an arc away from the vertices of $G$ for each $i$ and $p$. Suppose that $b_{i,p}\cap \psi_{j}(B^{3})=\emptyset$ for $i\neq j$ and $b_{i,p}\cap \psi_{i}(B^{3})=\partial b_{i,p}\cap \psi_{i}(B^{3})$ is a component of $\psi_{i}(A)$ for each $i$ and $p$. Let $H$ be a spatial graph satisfying with the followings: 
\begin{eqnarray}
&& G\setminus \bigcup_{i,p}b_{i,p} = H\setminus \bigcup_{i,p}b_{i,p}\cup\bigcup_{i}\psi_{i}(T),\label{g1}\\
&& H= G \cup {\bigcup_{i,p}\partial b_{i,p}}\cup {\bigcup_{i}\psi_{i}(T)}\setminus {\bigcup_{i,p}{\rm int}(G\cap b_{i,p})}\cup {\bigcup_{i}\psi_{i}({\rm int}A)}. \label{g2}
\end{eqnarray}
Then $H$ is called a {\it band sum of Hopf links} and $G$. The union $b_{i,1}\cup b_{i,2}\cup \psi_{i}(B^{3})$ is called a {\it Hopf chord}, and $b_{i,1}$ and $b_{i,2}$ are called the {\it associated bands} of the Hopf chord. An edge $e$ of $G$ is called an {\it associated edge} of the Hopf chord if $e$ has intersection with the associated bands. 

\begin{figure}[htbp]
      \begin{center}
\scalebox{0.45}{\includegraphics*{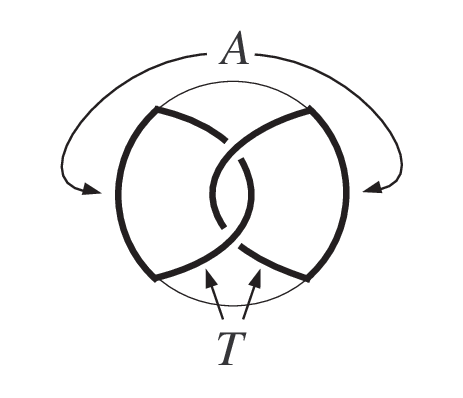}}
      \end{center}
   \caption{}
  \label{Hopf_chord}
\end{figure} 

Note that a crossing change is realized by a band sum of a single Hopf link, see Fig. \ref{Hopf_chord2}. Since any two spatial graphs which are homeomorphic as abstract graphs are transformed into each other by crossing changes and ambient isotopies, we have the following lemma. 

\begin{Lemma}\label{forklore} {\rm (\cite{S69}, \cite{Yama90}, \cite{TY02})} 
Let $G$ and $H$ be two spatial graphs which are homeomorphic as abstract graphs. Then $H$ is a band sum of Hopf links and $G$. 
\end{Lemma}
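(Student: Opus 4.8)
The plan is to prove Lemma~\ref{forklore}, which asserts that if $G$ and $H$ are spatial graphs that are homeomorphic as abstract graphs, then $H$ can be obtained as a band sum of Hopf links and $G$. The starting point is the classical fact, cited from \cite{S69}, \cite{Yama90}, \cite{TY02} and already announced in the paragraph preceding the statement, that any two spatial embeddings of the same abstract graph are transformed into one another by a finite sequence of crossing changes together with ambient isotopies. So first I would fix a homeomorphism of abstract graphs and use it to regard $G$ and $H$ as two spatial embeddings of a single abstract graph $\Gamma$; this reduces the problem to realizing a finite sequence of crossing changes (interspersed with ambient isotopies) as a single band sum of Hopf links.

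Next I would recall the basic local picture, indicated in Fig.~\ref{Hopf_chord2}: a \emph{single} crossing change is exactly a band sum with one Hopf chord, where the two associated bands $b_{i,1}, b_{i,2}$ are attached to the two strands that meet at the crossing and $\psi_i(T \cup A)$ supplies the clasping Hopf link whose band-summing reverses the sign of that crossing. The content of the lemma is then that \emph{several} crossing changes, performed at disjoint locations in a diagram and separated by ambient isotopies, can all be encoded simultaneously by the data $(\psi_i, b_{i,p})$ appearing in the definition of a band sum of Hopf links. The key steps are therefore: (i) by general position, arrange the finitely many crossing-change sites to be pairwise disjoint and each contained in a small ball disjoint from the vertices of $\Gamma$; (ii) at each site introduce an embedded copy $\psi_i : B^3 \to \mathbb{S}^3$ of the model tangle $T$ with its arc system $A$, together with the two associated bands $b_{i,1}, b_{i,2}$ connecting the local strands of $G$ to the arcs $\psi_i(A)$; (iii) verify that the disjointness hypotheses of the definition ($\psi_i(B^3) \cap G = \emptyset$, the $\psi_i(B^3)$ mutually disjoint, the $b_{i,p}$ mutually disjoint, each $b_{i,p} \cap G$ a single arc away from vertices, and each $b_{i,p}$ meeting only its own $\psi_i(B^3)$ in one component of $\psi_i(A)$) can all be met after a small isotopy; and (iv) check that the resulting graph built by equations \eqref{g1} and \eqref{g2} is ambient isotopic to $H$.

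The main obstacle, and the only part requiring genuine care, is step (iii): one must absorb the intermediate ambient isotopies of the crossing-change sequence so that all the Hopf chords can be realized \emph{at once}, with the stringent disjointness conditions holding simultaneously. Because the crossings in a diagram occur at distinct points and the vertices form a finite set, a generic small perturbation places the supporting balls $\psi_i(B^3)$ and all the bands $b_{i,p}$ in the required mutually disjoint configuration, with each band meeting $G$ in a single arc off the vertices; this is where the ambient isotopies of the original sequence get used to bring every crossing site into a standard product neighborhood before the bands are attached. Once this placement is arranged, steps (i), (ii) and (iv) are routine: each Hopf chord effects precisely one of the prescribed crossing changes, the band-sum formulas \eqref{g1} and \eqref{g2} reproduce exactly the strand-replacements along the $b_{i,p}$, and the final ambient isotopy identifying the constructed graph with $H$ follows because performing all the crossing changes returns the embedding of $\Gamma$ to the one representing $H$.
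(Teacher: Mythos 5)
Your proposal follows essentially the same route as the paper, which derives Lemma~\ref{forklore} from exactly the two observations you start with: a single crossing change is a band sum with one Hopf link (Fig.~\ref{Hopf_chord2}), and any two spatial graphs homeomorphic as abstract graphs are related by crossing changes and ambient isotopies. The additional care you take in arranging all the Hopf chords simultaneously with the required disjointness is a correct elaboration of what the paper leaves to the cited references.
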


\begin{figure}[htbp]
      \begin{center}
\scalebox{0.375}{\includegraphics*{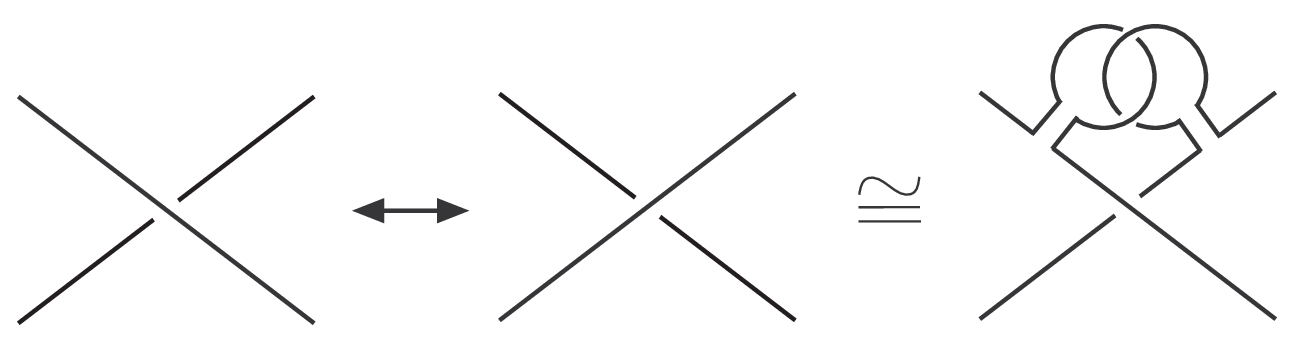}}
      \end{center}
   \caption{}
  \label{Hopf_chord2}
\end{figure} 

Then for a Delta move and a band sum of Hopf links, it is known the following. 

\begin{Lemma}\label{delta_lemma}
Each of the local moves (1), (2) and (3) illustrated in Fig. \ref{Hopf_chord3} is realized by Delta moves and ambient isotopies. 
\end{Lemma}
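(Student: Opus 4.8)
The plan is to verify the three moves one at a time, by exhibiting inside the supporting ball an explicit finite sequence of Delta moves and ambient isotopies that carries the ``before'' picture of each move to its ``after'' picture. All three are local moves: they differ from the identity only inside a $3$-ball and agree with the identity on its boundary. Consequently it suffices to argue purely diagrammatically with tangles in a disk, and the statement for arbitrary spatial graphs then follows by applying the same sequence inside the supporting ball while leaving the rest of $G$ fixed. So the whole lemma reduces to three diagram manipulations.

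The basic device I would use throughout is the standard ``finger / canceling-pair'' trick for installing the triangular region that a Delta move requires. A single Delta move acts on three mutually adjacent strands bounding a small embedded disk (the Delta triangle), so to realize a prescribed change I first isotope the relevant strands---pushing a finger of one strand, or inserting a trivial pair of oppositely oriented parallel arcs that costs nothing up to ambient isotopy---until three strands are brought into the standard Delta configuration read off from Fig.~\ref{Hopf_chord3}. I then apply one Delta move, the local replacement of that triangle, and finally withdraw the auxiliary finger or canceling pair by the reverse isotopy, leaving exactly the net effect of the move. For move~(1) I expect this to require a single Delta move after one finger move; moves~(2) and~(3) should then follow by the same scheme, either directly or by composing the sequence built for~(1) with an ambient isotopy that reverses an orientation or repositions one strand, so that~(2) and~(3) are obtained from~(1) by symmetry rather than by an independent computation.

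The step I expect to be the main obstacle is the bookkeeping that guarantees the installed Delta triangle is an honestly embedded disk meeting the three intended strands in exactly the prescribed way, with no spurious crossings created when the finger or canceling pair is introduced and then removed; in particular one must check that each intermediate diagram is a faithful ambient-isotopy representative and that the triangle does not accidentally enclose a fourth strand. A useful consistency check is the fact that a Delta move preserves all pairwise linking numbers, so the net effect of each sequence must be linking-preserving; any move in Fig.~\ref{Hopf_chord3} that altered a linking number could not be realized in this way, and this constrains how the before/after strands must be oriented. Once the triangles are correctly placed and the orientations are matched, the remaining verification is routine diagram chasing.
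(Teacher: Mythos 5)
Your general framing is fine: the three moves are local, so it suffices to exhibit, for each one, a finite sequence of Delta moves and ambient isotopies inside the supporting ball. But that exhibition is the entire content of the lemma, and your proposal never produces it. You describe a generic device (finger moves, canceling pairs, installing a Delta triangle) and then assert that move (1) ``should'' need one finger move and one Delta move, and that (2) and (3) ``should'' follow from (1) by symmetry. None of this is verified. The moves of Fig.~\ref{Hopf_chord3} are the band/Hopf-chord moves of \cite[Lemma 2.2]{TY02} --- configurations such as a string or band passing through a Hopf chord, and Hopf chords being exchanged or slid along their associated edges --- and these are genuinely different local configurations; there is no ambient isotopy or orientation reversal carrying one to another, so (2) and (3) cannot be dismissed as symmetric variants of (1). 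Moreover, the claim that a single Delta move suffices for (1) is a guess: realizing even one of these moves typically requires a specific arrangement of the three strands of the Delta triangle relative to the bands of the Hopf chord, and the count and placement of the Delta moves is exactly the nontrivial bookkeeping you defer. A plan for how one would draw the pictures is not a proof that the pictures close up.

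For comparison, the paper does not prove this lemma either; it cites \cite[Lemma 2.2]{TY02}, where the explicit diagrammatic sequences are carried out. So the honest options are: reproduce those sequences move by move (which your proposal stops short of doing), or cite the reference. Your linking-number consistency check is a reasonable sanity test but carries no logical weight here, since preserving linking numbers is necessary, not sufficient, for realizability by Delta moves.
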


\begin{proof}
See \cite[Lemma 2.2]{TY02}. 
\end{proof}

\begin{figure}[htbp]
      \begin{center}
\scalebox{0.35}{\includegraphics*{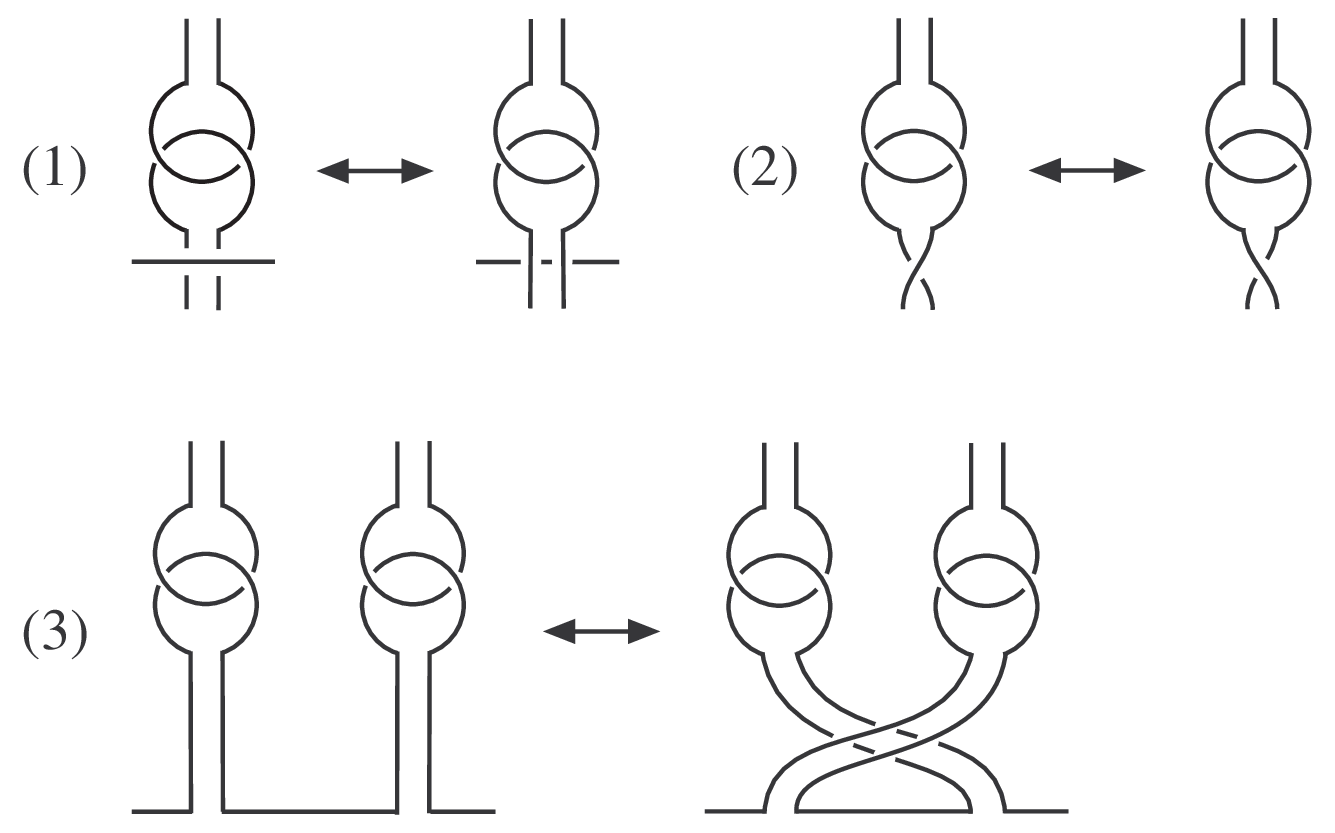}}
      \end{center}
   \caption{}
  \label{Hopf_chord3}
\end{figure} 

Moreover, we also have the following. 

\begin{Lemma}\label{delta_lemma2}
Each of the local moves (1), (2), $\ldots$, (6) illustrated in Fig. \ref{Hopf_cancel} is realized by an ambient isotopy. 
\end{Lemma}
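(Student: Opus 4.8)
The plan is to treat each of the six moves separately, exploiting the fact that every such move is \emph{local}: it is supported in a ball $B$ (the union of the relevant Hopf chord $b_{i,1}\cup b_{i,2}\cup\psi_{i}(B^{3})$ together with a regular neighborhood of the associated edges), and outside $B$ the ``before'' and ``after'' spatial graphs literally coincide by the conditions \eqref{g1} and \eqref{g2}. Thus it suffices, for each move, to produce an orientation-preserving self-homeomorphism of ${\mathbb S}^{3}$ that is the identity outside $B$ and carries the ``before'' configuration to the ``after'' configuration inside $B$. Since an ambient isotopy supported in an embedded ball extends by the identity to all of ${\mathbb S}^{3}$, constructing the isotopy locally is enough.

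First I would dispose of the moves that are purely combinatorial rearrangements of the bands and chords: sliding a band $b_{i,p}$ along its associated edge, interchanging the roles of the two associated bands of a single Hopf chord, or commuting two Hopf chords whose supporting balls are disjoint. In each of these cases the required isotopy is a standard ``track the band'' or finger isotopy; one pushes the foot $\partial b_{i,p}\cap G$ along the edge, or slides one disjoint subconfiguration past another, and since everything takes place in mutually disjoint embedded balls there is no obstruction. These reduce to the familiar fact that isotopic arcs carrying isotopic framings yield ambient isotopic configurations.

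The substantive cases are the \emph{cancellation} moves, where a Hopf chord (or a canceling pair of them) is removed outright. Here the key geometric input is that the relevant Hopf-link component $\psi_{i}(T)$, together with its two bands, bounds an embedded disk in the complement of the remainder of $G$ --- equivalently, the clasp introduced by the band sum is a trivial clasp on a single edge, or the union of the two chords bounds a twice-punctured sphere in the exterior. I would make this disk explicit and then isotope $\psi_{i}(T)$ across it, dragging the bands along and collapsing the chord. The main obstacle will be the bookkeeping needed to verify that this spanning disk meets the rest of $G$ only in the prescribed arcs $\partial b_{i,p}\cap G$, so that contracting the chord never forces the disk to cross another edge or a vertex; this is precisely what the disjointness hypotheses $b_{i,p}\cap b_{j,q}=\emptyset$ and $b_{i,p}\cap\psi_{j}(B^{3})=\emptyset$ guarantee in each local picture. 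Once this disjointness is confirmed, the cancellation isotopy is immediate, and the lemma follows by applying the appropriate case to each of the six configurations in Fig.~\ref{Hopf_cancel}.
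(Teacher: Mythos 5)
Your proposal is correct and takes essentially the same approach as the paper: the paper's proof simply declares moves (1)--(5) clear and supplies a separate figure exhibiting the explicit local isotopy for move (6) (the slide of a band root along an edge past a vertex), which is precisely the kind of ball-supported isotopy you describe. Your division into band rearrangements handled by finger isotopies and cancellations handled by collapsing a chord inside its disjoint supporting ball is consistent with how these moves are invoked later in Lemma 2.5 and the proof of Theorem 1.1.
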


\begin{proof}
(1), (2), $\ldots$, (5) are clear. In the case of (6), see Fig. \ref{Hopf_chord4}. 
\end{proof}

\begin{figure}[htbp]
      \begin{center}
\scalebox{0.35}{\includegraphics*{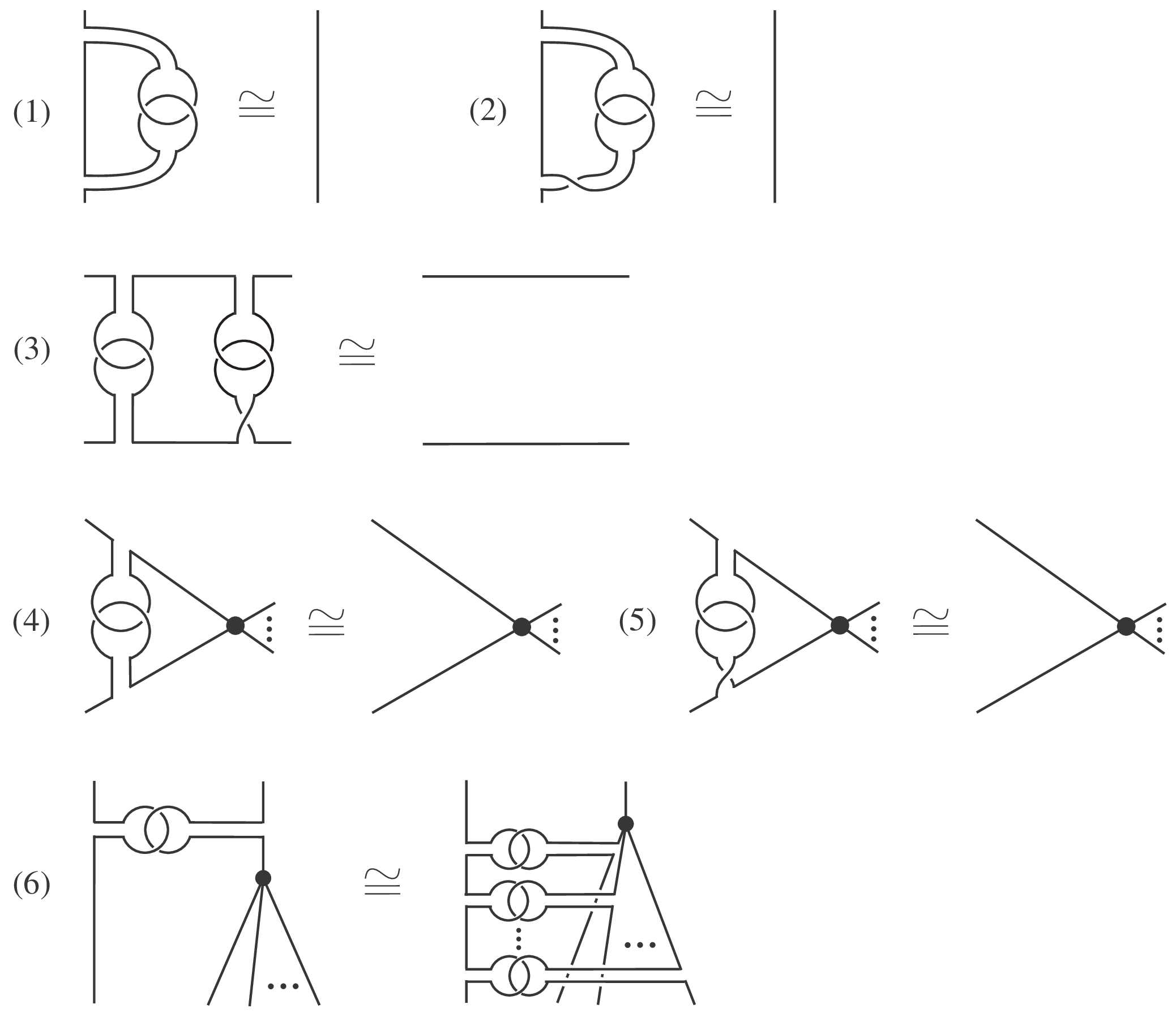}}
      \end{center}
   \caption{}
  \label{Hopf_cancel}
\end{figure} 
\begin{figure}[htbp]
      \begin{center}
\scalebox{0.3}{\includegraphics*{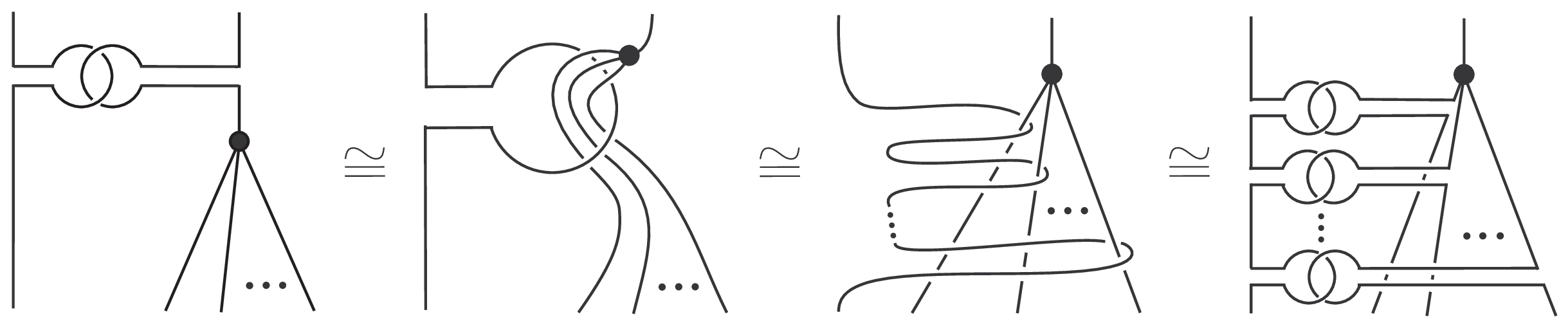}}
      \end{center}
   \caption{}
  \label{Hopf_chord4}
\end{figure} 

Now we show that neighborhood homotopy implies Delta neighborhood equivalence. Namely we have the following. 

\begin{Lemma}\label{delta_nh1}
If two spatial graphs are neighborhood homotopic then they are Delta neighborhood equivalent. 
\end{Lemma}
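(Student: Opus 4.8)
The plan is to compare the generating moves of the two equivalence relations. By definition, both neighborhood homotopy and Delta neighborhood equivalence are generated by edge contractions, vertex splittings and ambient isotopies, together with one extra type of move: a crossing change between two edges belonging to the same connected component in the former case, and a Delta move in the latter. Hence it suffices to show that a single crossing change between two edges $e_{1}$ and $e_{2}$ lying on one and the same connected component of a spatial graph $G$ is realized by Delta moves, edge contractions, vertex splittings and ambient isotopies.

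First I would rephrase the crossing change in the language of Hopf chords. By the observation illustrated in Fig. \ref{Hopf_chord2}, such a crossing change is nothing but the band sum of $G$ with a single Hopf link whose two associated bands lie on $e_{1}$ and $e_{2}$. Thus, writing $G'$ for the result of the crossing change, $G$ and $G'$ differ by a single Hopf chord both of whose associated bands are attached to edges of one and the same connected component. It therefore suffices to prove that such a Hopf chord can be removed by Delta moves, edge contractions, vertex splittings and ambient isotopies.

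Here the hypothesis that $e_{1}$ and $e_{2}$ lie on the \emph{same} component is decisive. Since that component is connected, I would choose a path $\gamma$ in it joining the foot of the band on $e_{1}$ to the foot of the band on $e_{2}$, and then slide one of the two bands along $\gamma$ toward the other. While the band travels along an edge this is an ambient isotopy; when it must cross another strand of $G$ the passage is realized by Delta moves using the local moves of Lemma \ref{delta_lemma} (Fig. \ref{Hopf_chord3}); and when it must pass a vertex of $G$ the passage is absorbed by a vertex splitting or an edge contraction. Once the two bands have been brought adjacent on a single arc, the Hopf chord is in one of the configurations of Lemma \ref{delta_lemma2}, in particular the move of Fig. \ref{Hopf_chord4}, and is cancelled by an ambient isotopy. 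This exhibits the crossing change as a composition of the allowed moves, and the lemma follows.

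The step I expect to be the main obstacle is the careful verification that the entire Hopf chord --- the two bands together with the Hopf-link ball $\psi_{i}(B^{3})$ --- can be transported rigidly past every vertex and every intervening strand using only the moves supplied by Lemmas \ref{delta_lemma} and \ref{delta_lemma2}, and that the terminal position really matches one of the available cancellation moves. It is worth stressing that the same-component hypothesis cannot be dropped: for bands on different components there is no such path $\gamma$ inside a single component, and the linking number, which is invariant under Delta moves \cite{MN89}, genuinely obstructs the cancellation. This is also why only crossing changes on the same component, and not arbitrary ones, can be expected to be subsumed by Delta neighborhood equivalence.
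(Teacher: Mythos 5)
Your reduction of the lemma to the removal of a single Hopf chord whose two associated bands lie on the same component is exactly the paper's first step, and your final observation that a nugatory chord (both feet adjacent on one arc, nothing in between) cancels by an ambient isotopy is also sound. The gap is in the transport. You propose to slide one band along a path $\gamma$ in the component, handling intervening strands by Lemma \ref{delta_lemma} and vertices by an unspecified ``vertex splitting or edge contraction.'' No mechanism is actually given for the vertex step, and the naive ones cannot work: if the foot of a band could be carried past a vertex onto one chosen adjacent edge by Delta moves and ambient isotopies alone, your entire argument would use only Delta moves and ambient isotopies and would therefore prove that every crossing change between two edges of the same component is realized by Delta moves and isotopies. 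That statement is false. Take $G_{1}$ to be two disjoint triangles $C_{1}$ and $C_{2}$ joined by a bridge, and change a crossing between an edge of $C_{1}$ and an edge of $C_{2}$; this alters ${\rm lk}(C_{1},C_{2})$ by $\pm 1$, whereas ${\rm lk}(C_{1},C_{2})$ is invariant under Delta moves and ambient isotopies (a Delta move restricted to any two of its three strands is a trivial tangle isotopy, and when all three strands lie on $C_{1}\cup C_{2}$ it preserves the linking number by \cite{MN89}). Consistently with this, the move of Fig. \ref{Hopf_cancel} (6) does not carry a band root past a vertex onto one chosen edge: it distributes the Hopf chord over all the other edges incident to that vertex, which is why the paper speaks of a band sum of Hopf \emph{links} after the sliding. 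A path-following slide therefore does not preserve ``a single chord with two feet.''

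The point you are missing is that the edge contractions are not a technicality for passing vertices; they are what destroys the obstruction. The paper first uses move (6) to push all band roots off a spanning tree $T_{1}$ of the component, then contracts all of $T_{1}$, turning the component into a bouquet in which every cycle passes through a single vertex. There are then no disjoint constituent cycles left inside that component to carry a linking number, so each Hopf chord can be localized at the base vertex in one of the standard positions of Fig. \ref{Hopf_cancel0} by Lemma \ref{delta_lemma}, cancelled by the isotopies of Fig. \ref{Hopf_cancel} (1), (2), (4), (5), and the tree restored afterwards by vertex splittings. Your argument needs to be reorganized around this contraction rather than around a band slide along a path. (A minor further point: Fig. \ref{Hopf_chord4} illustrates the vertex move (6) of Lemma \ref{delta_lemma2}, not a cancellation move.)
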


\begin{proof}
Let $G$ be a spatial graph. We show that a single crossing change on the same component of $G$ is realized by Delta moves, edge contractions, vertex splittings and ambient isotopies. Let $G'$ be a spatial graph which is obtained from $G$ by a single crossing change on the same component $G_{1}$ of $G$. Then by Lemma \ref{forklore}, $G'$ is a band sum of a Hopf link and $G$, where the associated edges of the Hopf chord belong to $G_{1}$. Let $T_{1}$ be a spanning tree of $G_{1}$. Then by sliding the roots of the associated bands of the Hopf chord along $T_{1}$ by using a move in Fig. \ref{Hopf_cancel} (6) if necessary, we can regard $G'$ as a band sum of Hopf links and $G$, where the associated edges of each of the Hopf chords do not belong to $T_{1}$. Let $G''$ be the spatial graph which is obtained from $G'$ by contracting all edges of $T_{1}$. Note that $G''$ has a spatial bouquet as a component, and the associated edges of each of the Hopf chords belong to the bouquet. Then by Lemma \ref{delta_lemma}, we deform $G''$ by Delta moves and ambient isotopies so that each of the Hopf chords is contained in a small $3$-ball as illustrated in Fig. \ref{Hopf_cancel0} (1), (2), (3) or (4). Then by the moves as illustrated in Fig. \ref{Hopf_cancel} (1), (2), (4) and (5), all of the Hopf chords can be removed. We finally obtain $G$ from $G''$ by restoring $T_{1}$ by suitable vertex splittings. 
\end{proof}

\begin{figure}[htbp]
      \begin{center}
\scalebox{0.35}{\includegraphics*{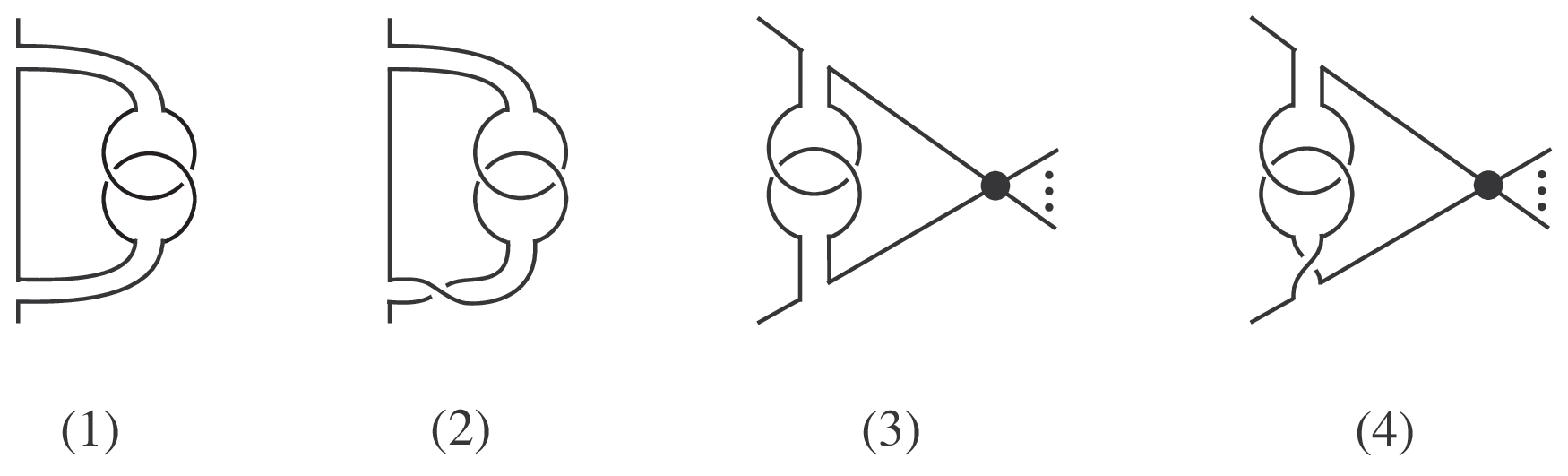}}
      \end{center}
   \caption{}
  \label{Hopf_cancel0}
\end{figure} 

In particular, the converse of Lemma \ref{delta_nh1} is also true in the case of $2$-component spatial graphs as follows. 

\begin{Lemma}\label{delta_nh2}
Two $2$-component spatial graphs are neighborhood homotopic if and only if they are Delta neighborhood equivalent. 
\end{Lemma}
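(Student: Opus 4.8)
The plan is to use Lemma \ref{delta_nh1} for one implication and to prove only the converse. Lemma \ref{delta_nh1} already gives that neighborhood homotopic spatial graphs are Delta neighborhood equivalent, with no restriction on the number of components, so what remains is to show that two Delta neighborhood equivalent $2$-component spatial graphs are neighborhood homotopic. Since Delta neighborhood equivalence is generated by Delta moves together with edge contractions, vertex splittings and ambient isotopies, and since the latter three are already among the moves allowed in neighborhood homotopy, it suffices to prove that a single Delta move performed on a $2$-component spatial graph $G=G_{1}\cup G_{2}$ can be realized by crossing changes on the same component, edge contractions, vertex splittings and ambient isotopies. Because a Delta move is the identity outside its supporting ball, it is enough to realize it by such moves locally.

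The first step is a pigeonhole observation that is special to the two-component case. A Delta move is supported in a $3$-ball $B$ whose interior meets $G$ in exactly three arcs, and after a small ambient isotopy we may assume that these three arcs lie in the interiors of edges of $G$, away from the vertices. Since $G$ has only two components, at least two of the three arcs belong to one and the same component, say $G_{1}$. Thus the whole problem reduces to the following local claim: a Delta move, two of whose three strands belong to a single component, is realized by crossing changes between strands of that component together with ambient isotopies.

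To prove this claim I would argue locally inside $B$. Label the three strands $1,2,3$ with strands $1$ and $2$ lying in $G_{1}$. The idea is to realize the Delta move by the sequence: a crossing change between strands $1$ and $2$, which clasps them together; an ambient isotopy that carries strand $3$ across the resulting clasp; and a second crossing change between strands $1$ and $2$ that unclasps them and restores their mutual position. Since strands $1$ and $2$ both lie in $G_{1}$, both crossing changes are crossing changes on the same component, while strand $3$ is moved only by ambient isotopy, so no crossing change between $G_{1}$ and $G_{2}$ is ever used. Comparing the before-and-after diagrams of this sequence with the standard picture of the Delta move (Fig. \ref{deltamove}), in the same spirit as the local computations of \cite{TY02}, should show that the net effect is exactly one Delta move; note that the spanning-tree contraction of Lemma \ref{delta_nh1} is not needed here, because the supporting ball avoids the vertices.

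I expect the verification of this local claim to be the main obstacle, since it is the only place where the combinatorics of the Delta move enters and where one must check, diagram in hand, that the third strand can be displaced purely by isotopy once strands $1$ and $2$ are clasped. It is worth emphasizing that the pigeonhole step, and hence the whole argument, genuinely uses that $G$ has two components: for three or more components a Delta move may involve one strand from each of three distinct components, and such a move can alter Borromean-type linking information that no crossing change on a single component can detect. Finally, the conclusion is consistent with the Murakami--Nakanishi theorem \cite{MN89} that Delta moves preserve all pairwise linking numbers and, for two-component links, with the classical fact \cite{M54} that link homotopy is detected by the linking number alone.
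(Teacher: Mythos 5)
Your proposal is correct and follows essentially the same route as the paper: reduce to a single Delta move via Lemma \ref{delta_nh1}, observe by pigeonhole that two of the three strands lie in the same component, and realize the Delta move by two crossing changes between those two strands together with an ambient isotopy carrying the third strand through (this is exactly the content of the paper's Fig. \ref{delta_hm}). The local verification you flag as the remaining obstacle is precisely what that figure supplies, so no new idea is missing.
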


\begin{proof}
By Lemma \ref{delta_nh1}, it is sufficient to show that if two $2$-component spatial graphs are Delta neighborhood equivalent then they are neighborhood homotopic. Let us consider a single Delta move on a $2$-component spatial graph. Then, there exist at least two of the three strings in the Delta move such that they belong to the same component. Then the Delta move is realized by two crossing changes on the same component and an ambient isotopy, see Fig. \ref{delta_hm}, where two strings which belong to the same component are expressed in bold lines.
\end{proof}

\begin{figure}[htbp]
      \begin{center}
\scalebox{0.325}{\includegraphics*{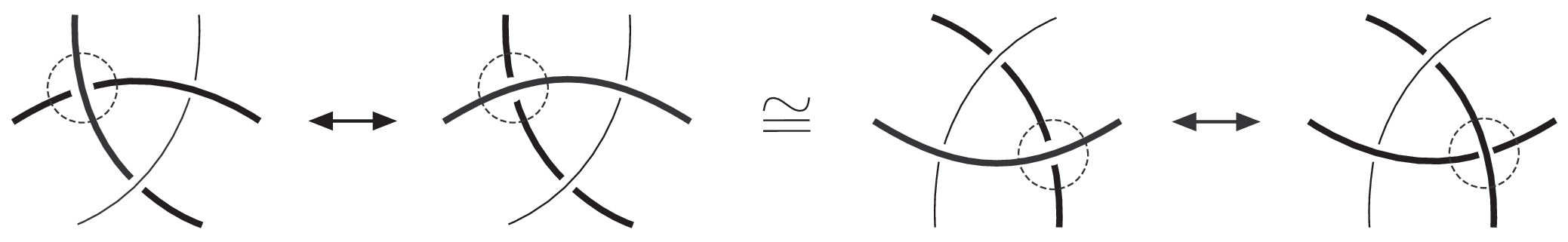}}
      \end{center}
   \caption{}
  \label{delta_hm}
\end{figure} 

\begin{Remark}\label{mu}
For a positive integer $n$, if $n\ge 3$ then there exist two spatial graphs which are Delta neighborhood equivalent but not neighborhood homotopic. Actually, Borromean rings can be undone by a single Delta move \cite{MN89} but not trivial up to link homotopy \cite{M54}. 
\end{Remark}

\section{Proof of Theorem \ref{main}} 

In this section we prove Theorem \ref{main}. 

\begin{proof}[Proof of Theorem \ref{main}]
By Lemma \ref{delta_nh2}, it follows that (1) and (2) are equivalent. In the following we show that (2) and (3) are equivalent. First we show that (2) implies (3). It is well known that a Delta move on a $2$-component oriented link does not change the linking number \cite{MN89} (the proof is the same as the proof of the fact that a Reidemeister move III does not change the linking number). Let ${\mathcal Z}=\left\{z_{1},z_{2},\ldots,z_{m}\right\}$ be a basis of $H_{1}(G_{1};{\mathbb Z})$ and ${\mathcal W}=\left\{w_{1},w_{2},\ldots,w_{n}\right\}$ a basis of $H_{1}(G_{2};{\mathbb Z})$. Note that both an edge contraction and a vertex splitting on $G$ do not change ${\rm lk}(z_{i},w_{j})$ through the isomorphism of the first homology. Moreover, since $z_{i}$ (resp. $w_{j}$) is represented by a homological sum of oriented knots in $G_{1}$ (resp. $G_{2}$), we see that ${\rm lk}(z_{i},w_{j})$ is represented by a sum of the linking numbers for some $2$-component constituent links in $G$. This implies that a Delta move on $G$ does not change ${\rm lk}(z_{i},w_{j})$. Thus it follows that there exist a basis ${\mathcal Z}'=\left\{z'_{1},z'_{2},\ldots,z'_{m}\right\}$ of $H_{1}(G'_{1};{\mathbb Z})$ and a basis ${\mathcal W}'=\left\{w'_{1},w'_{2},\ldots,w'_{n}\right\}$ of $H_{1}(G'_{2};{\mathbb Z})$ such that 
\begin{eqnarray*}
M_{G}\left({\mathcal Z},{\mathcal W}\right)=\left({\rm lk}(z_{i},w_{j})\right)=\left({\rm lk}(z'_{i},w'_{j})\right)=M_{G'}\left({\mathcal Z}',{\mathcal W}'\right).
\end{eqnarray*}
This implies that ${\rm Lk}(G_{1},G_{2})={\rm Lk}(G'_{1},G'_{2})$. 

Next we show that (3) implies (2). Assume that ${\rm Lk}(G_{1},G_{2})={\rm Lk}(G'_{1},G'_{2})$ is the sequence $\left\{d_{1},d_{2},\ldots ,d_{l}\right\}$ $(d_{i}\in {\mathbb Z}_{> 0},\ d_{i}|d_{i+1}\ (i=1,2,\ldots, l-1))$. In the following we deform $G=G_{1}\cup G_{2}$ into a certain canonical form by Delta moves, edge contractions, vertex splittings and ambient isotopies. Let $T_{i}$ be a spanning tree of $G_{i}$ $(i=1,2)$. Let $e_{1},e_{2},\ldots,e_{m}$ be all of the edges of $G_{1}$ which are not contained in $T_{1}$ and $f_{1},f_{2},\ldots,f_{n}$ all of the edges of $G_{2}$ which are not contained in $T_{2}$. Let ${\mathcal Z}=\left\{z_{1},z_{2},\ldots,z_{m}\right\}$ be the basis of $H_{1}(G_{1};{\mathbb Z})$ which are represented by $e_{1},e_{2},\ldots,e_{m}$, and ${\mathcal W}=\left\{w_{1},w_{2},\ldots,w_{n}\right\}$ the basis of $H_{1}(G_{2};{\mathbb Z})$ which are represented by $f_{1},f_{2},\ldots,f_{n}$. Let $B_{i}$ be a spatial bouquet obtained from $G_{i}$ by contracting all edges of $T_{i}$ ($i=1,2$). Note that all loops of $B_{1}$ (resp. $B_{2}$) can be regarded as $z_{1},z_{2},\ldots,z_{m}$ (resp. $w_{1},w_{2},\ldots,w_{n}$) through the isomorphism of the first homology. Let $U_{1}\cup U_{2}$ be the trivial $2$-component spatial graph, where $U_{1}$ is a spatial bouquet with $m$ loops and $U_{2}$ is a spatial bouquet with $n$ loops. Since $B_{i}$ is homeomorphic to $U_{i}$ ($i=1,2$), by Lemma \ref{forklore} it follows that $B_{1}\cup B_{2}$ is a band sum of Hopf links and $U_{1}\cup U_{2}$. Then in the same way as the proof of Lemma \ref{delta_nh1}, all of the Hopf chords whose associated edges belong to the same component can be removed by Delta moves and ambient isotopies. Moreover, by Lemma \ref{delta_lemma} and Lemma \ref{delta_lemma2}, we can deform the band sum of Hopf links so that all of the Hopf chords which are joining the loops $z_{i}$ and $w_{j}$ are parallel for each pair of $i$ and $j$, each of them have no twists of the associated bands as illustrated in Fig. \ref{Hopf_chord5} (1) or each of them has just a half twist of the associated bands as illustrated in Fig. \ref{Hopf_chord5} (2), which depends on the sign of ${\rm lk}(z_{i},w_{j})$, and therefore the number of such Hopf chords equals the absolute value of ${\rm lk}(z_{i},w_{j})$. 

\begin{figure}[htbp]
      \begin{center}
\scalebox{0.35}{\includegraphics*{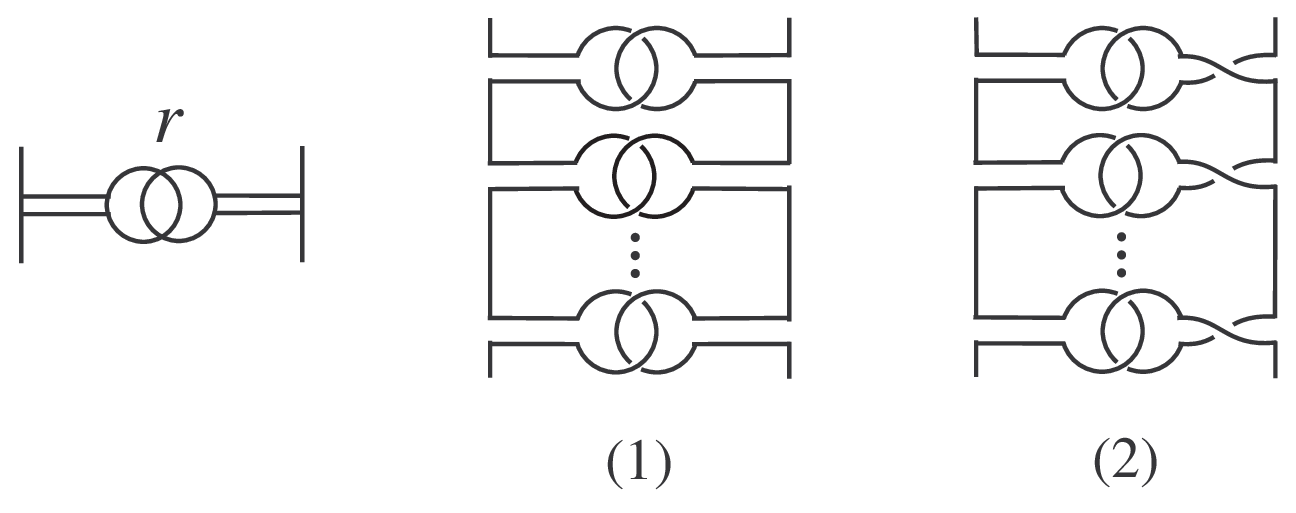}}
      \end{center}
   \caption{}
  \label{Hopf_chord5}
\end{figure} 

Recall that the sequence of elementary divisors of $M_{G}\left({\mathcal Z},{\mathcal W}\right)$ are $d_{1},d_{2},\ldots,d_{l}$. This means that $M_{G}\left({\mathcal Z},{\mathcal W}\right)$ is transformed into the diagonal $(m,n)$-matrix whose $(i,j)$-entry is $d_{i}$ if $i=j$ and $0$ if $i\neq j$ by elementary transformations: (i) exchanging two columns (resp. rows), (ii) multiplying a column (resp. row) by $(-1)$ and  (iii) adding a multiple of a column (resp. row) to another column (resp. row). Exchanging two columns (resp. rows) corresponds to exchanging two bases of $H_{1}(G_{2};{\mathbb Z})$ (resp. $H_{1}(G_{1};{\mathbb Z})$). The multiplication of the $j$th column by $(-1)$ can be realized by a deformation illustrated in Fig. \ref{Hopf_chord7} up to Delta moves and ambient isotopies under changing the base $w_{j}$ into the new base $-w_{j}$, where for an integer $r$, a symbol mark as in the first figure from the left in Fig. \ref{Hopf_chord5} denotes the parallel $r$ Hopf chords which are joining the loops $z_{i}$ and $w_{j}$ as illustrated in Fig. \ref{Hopf_chord5} (1) or (2), which depends on the sign of ${\rm lk}(z_{i},w_{j})$. The multiplication of the $i$th row by $(-1)$ also can be realized by a similar deformation under changing the base $z_{i}$ into the new base $-z_{i}$. Adding a multiple of the $p$th column to the $q$th column ($p\neq q$) can be realized by a deformation illustrated in Fig. \ref{Hopf_chord6} up to Delta moves and ambient isotopies under changing the base $w_{q}$ into the base $w_{q}+w_{p}$. Adding a multiple of the $p'$th row to the $q'$th row ($p'\neq q'$) also can be realized by a similar deformation under changing the base $z_{q'}$ into the base $z_{q'}+z_{p'}$. Therefore by applying Lemma \ref{delta_lemma} and Lemma \ref{delta_lemma2} if necessary, there exist a basis $\widetilde{\mathcal Z}=\left\{\tilde{z}_{1},\tilde{z}_{2},\ldots,\tilde{z}_{m}\right\}$ of $H_{1}(G_{1};{\mathbb Z})$ and a basis $\widetilde{\mathcal W}=\left\{\tilde{w}_{1},\tilde{w}_{2},\ldots,\tilde{w}_{n}\right\}$ of $H_{1}(G_{2};{\mathbb Z})$ such that all loops of $B_{1}$ are regarded as $\tilde{z}_{1},\tilde{z}_{2},\ldots,\tilde{z}_{m}$, all loops of $B_{2}$ are regarded as $\tilde{w}_{1},\tilde{w}_{2},\ldots,\tilde{w}_{n}$, all Hopf chords which are joining the loops $\tilde{z}_{i}$ and $\tilde{w}_{j}$ are parallel for each pair of $i$ and $j$ and ${\rm lk}(\tilde{z}_{i},\tilde{w}_{i})=d_{i}\ (i=1,2,\ldots,l)$. 

\begin{figure}[htbp]
      \begin{center}
\scalebox{0.45}{\includegraphics*{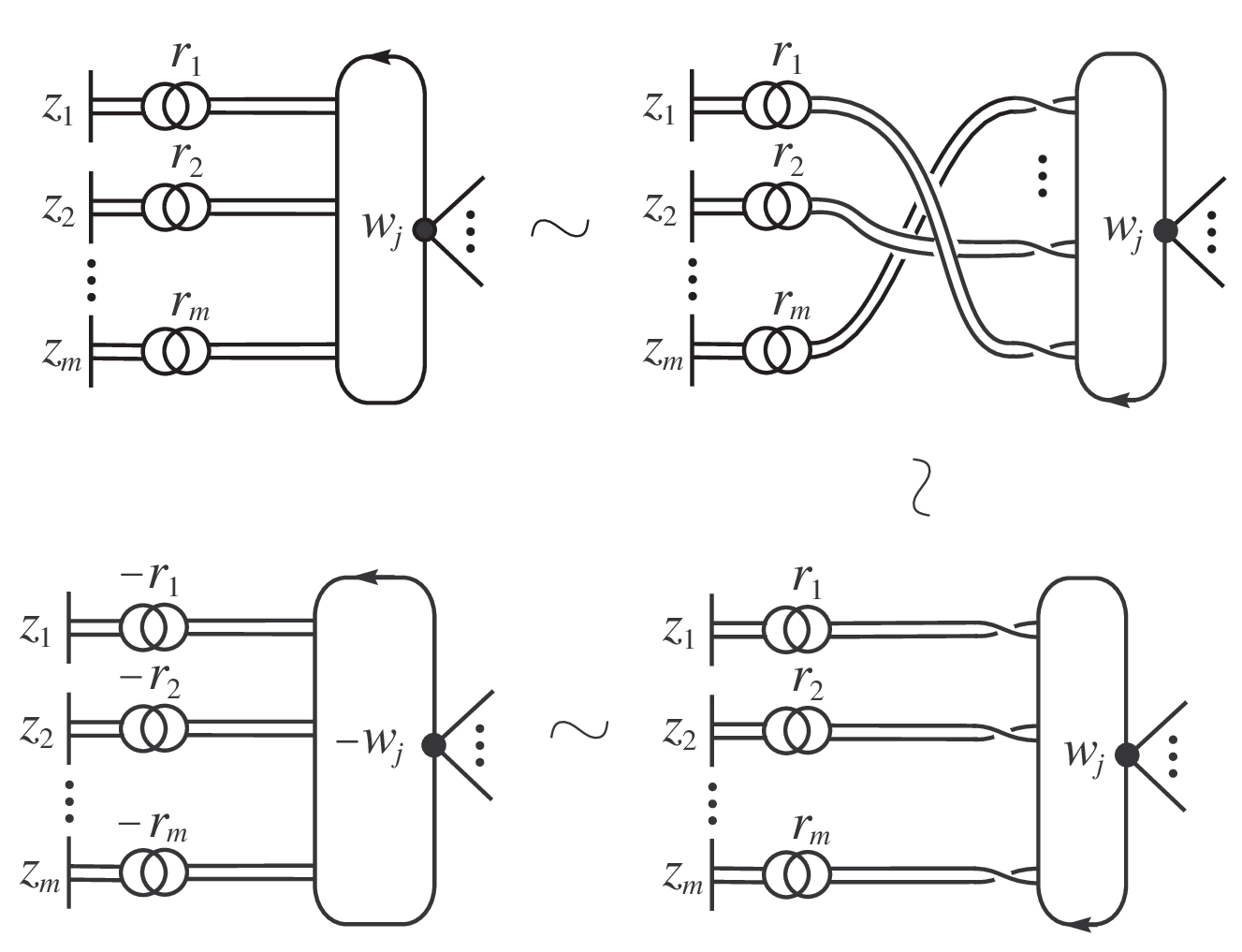}}
      \end{center}
   \caption{}
  \label{Hopf_chord7}
\end{figure} 
\begin{figure}[htbp]
      \begin{center}
\scalebox{0.45}{\includegraphics*{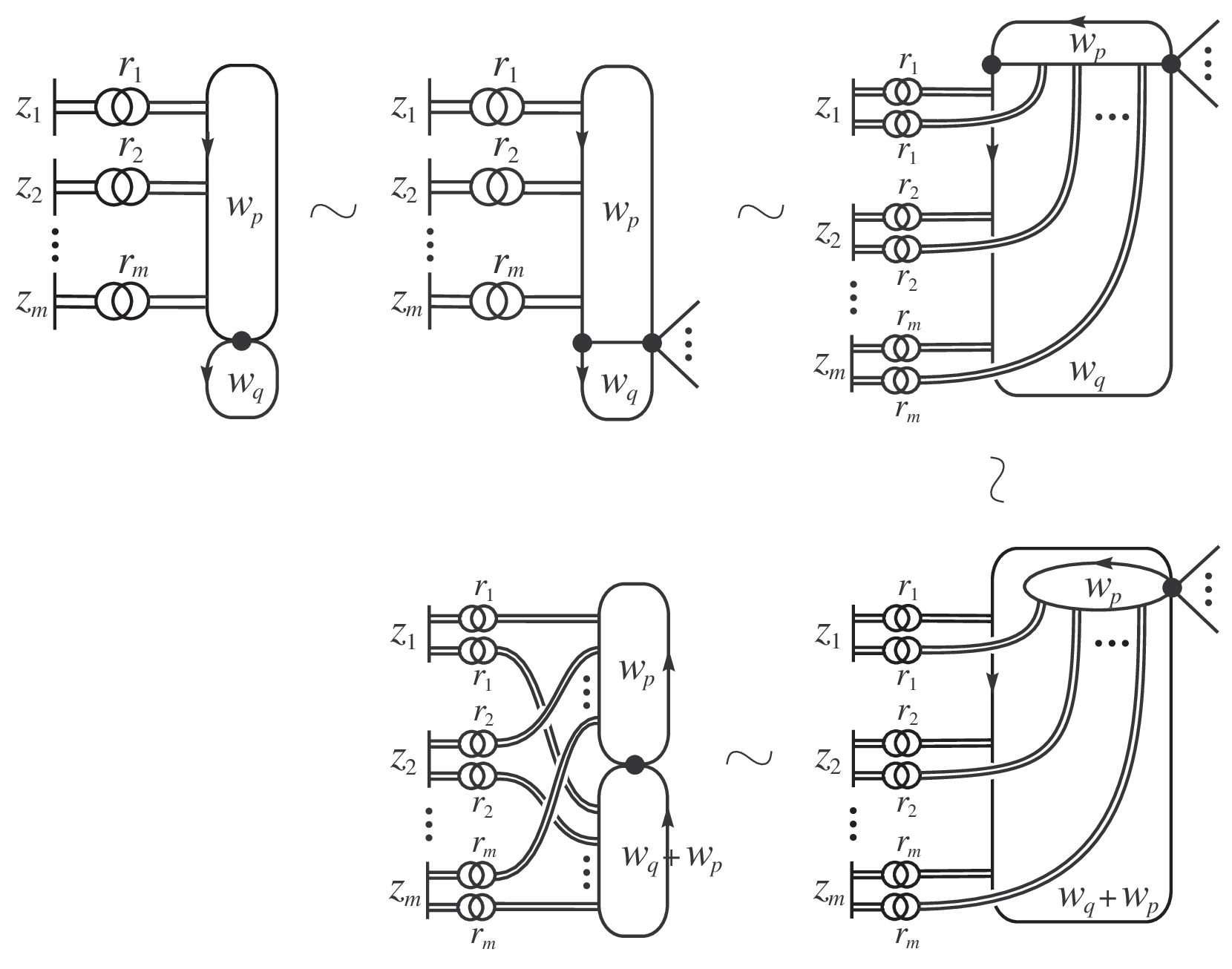}}
      \end{center}
   \caption{}
  \label{Hopf_chord6}
\end{figure} 

Next we deform $G'$ to a similar canonical form of a band sum of Hopf links and the trivial $2$-component spatial graph $U_{1}\cup U_{2}$ by Delta moves, edge contractions, vertex splittings and ambient isotopies, where $U_{1}$ is a spatial bouquet with $m$ loops and $U_{2}$ is a spatial bouquet with $n$ loops. Since ${\rm lk}(G'_{1},G'_{2})=\left\{d_{1},d_{2},\ldots ,d_{l}\right\}$, by applying Lemma \ref{delta_lemma}, the Hopf chords for $G$ and those for $G'$ are transformed into each other by Delta moves and ambient isotopies. Thus $G$ and $G'$ are transformed into each other by Delta moves, edge contractions, vertex splittings and ambient isotopies. This completes the proof. 
\end{proof}

\begin{Remark}
We refer the reader to \cite{MN89} for a complete classification of oriented links and \cite{taniyama95}, \cite{MT97}, \cite{ST03} for a complete classification of spatial embeddings of a graph up to Delta moves and ambient isotopies. 
\end{Remark}


%
{\normalsize
}

\end{document}